\documentclass[11pt,a4paper]{amsart}
\usepackage{graphicx,multirow,array,amsmath,amssymb}

\newtheorem{theorem}{Theorem}

\newtheorem{lemma}[theorem]{Lemma}

\begin{document}

\title{Domino tilings of the expanded Aztec diamond}

\author[S. Oh]{Seungsang Oh}
\address{Department of Mathematics, Korea University, Seoul 02841, Korea}
\email{seungsang@korea.ac.kr}

\thanks{Mathematics Subject Classification 2010: 05A15, 05B45, 05B50}
\thanks{This work was supported by the National Research Foundation of Korea(NRF) grant funded by the Korea government(MSIP) (No. NRF-2017R1A2B2007216).}

\maketitle

\begin{abstract}
The expanded Aztec diamond is a generalized version of the Aztec diamond,
with an arbitrary number of long columns and long rows in the middle. 
In this paper, we count the number of domino tilings of the expanded Aztec diamond. 
The exact number of domino tilings is given by recurrence relations of state matrices 
by virtue of the state matrix recursion algorithm, 
recently developed by the author to solve various two-dimensional regular lattice model enumeration problems.
\end{abstract}

\section{Introduction} \label{sec:intro}

In both combinatorial mathematics and statistical mechanics,
domino tiling of the Aztec diamond is an important subject.
The Aztec diamond of order $n$ consists of all lattice squares 
that lie completely inside the diamond shaped region $\{ (x,y) : |x|+|y| \leq n+1 \}$. 
The Aztec diamond theorem from the excellent article of Elkies, Kuperberg, Larsen and Propp~\cite{EKLP}
states that the number of domino tilings of the Aztec diamond of order $n$ is equal to $2^{n(n+1)/2}$.
From the statistical mechanics viewpoint, tilings of large Aztec diamonds exhibit a striking feature. 
The Arctic circle theorem proved by Jockusch, Propp and Shor~\cite{JPS}
says that a random domino tiling of a large Aztec diamond tends to be frozen outside a certain circle.

\begin{figure}[h]
\includegraphics{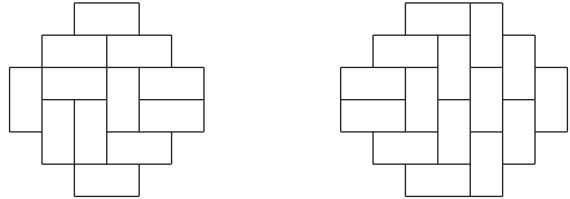}
\caption{The Aztec diamond of order 3, the augmented Aztec diamond of order 3,
and their domino tilings}
\label{fig:Aztec}
\end{figure}

The augmented Aztec diamond looks much like the Aztec diamond,
except that there are three long columns in the middle instead of two.
See Figure~\ref{fig:Aztec}.
The number of domino tilings of the augmented Aztec diamond of order $n$
was computed by Sachs and Zernitz~\cite{SZ}
as $\sum_{k=0}^n {{n}\choose{k}} \cdot {{n+k}\choose{k}}$, known as the Delannoy numbers.
Notice that the former number is much larger than the later.
Indeed, the number of domino tilings of a region is very sensitive to boundary conditions~\cite{MS1, MS2}.
More interesting patterns related to the Aztec diamond allowing some squares removed
have been deeply studied and Propp proposed a survey of these works~\cite{Pr}.

In this paper, we consider a generalized region of the Aztec diamond
which has an arbitrary number of long columns and long rows in the middle. 
The expanded {\em $(p,q)$-Aztec diamond\/} of order $n$, denoted by $AD_{(p,q;n)}$,
is defined as the union of  $2n(n + p + q + 1) + pq$  unit squares,
arranged in bilaterally symmetric fashion as a stack of $2n + q$ rows of squares,
the rows having lengths $p \! + \! 2$, $p \! + \! 4$, \dots, $2n \! + \! p \! - \! 2$,
$2n \! + \! p$, \dots, $2n \! + \! p$, $2n \! + \! p \! - \! 2$, \dots, $p \! + \! 2$,
as drawn in Figure~\ref{fig:domino}.
Let $\alpha_{(p,q;n)}$ denote the number of of domino tilings of $AD_{(p,q;n)}$.
Note that $\alpha_{(p,q;n)} = 0$ for odd $pq$ because $AD_{(p,q;n)}$ consists of odd number of squares.

\begin{figure}[h]
\includegraphics{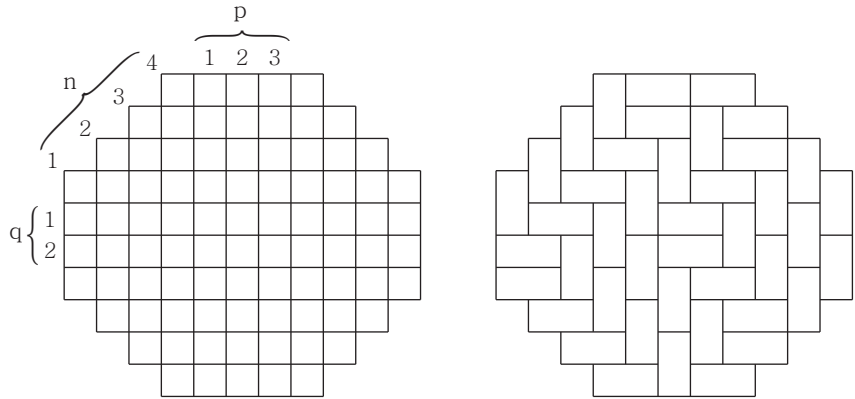}
\caption{The expanded Aztec diamond $AD_{(3,2;4)}$ and a domino tiling}
\label{fig:domino}
\end{figure}

Recently several important enumeration problems regarding various two-dimensional regular lattice models
are solved by means of the {\em state matrix recursion algorithm\/}, introduced by the author.
This algorithm provides recursive matrix-relations to enumerate
monomer and dimer coverings and independent vertex sets known as the Merrifield--Simmons index.
These problems have been major outstanding unsolved combinatorial problems,
and this algorithm shows considerable promise for further two-dimensional lattice model enumeration studies.
See \cite{OhV2, OhD1, OhV1} for more details.

Using the state matrix recursion algorithm,
we present a recursive formula producing the exact number of $\alpha_{(p,q;n)}$.
Throughout the paper, $\mathbb{O}$ is a zero-matrix with an appropriate size,
and $A^t$ is the transpose of a matrix $A$.

\begin{theorem}\label{thm:main}
The number $\alpha_{(p,q;n)}$ of domino tilings of the $(p,q)$-Aztec diamond of order $n$
is the $(1,1)$-entry of the following $2^p \! \times \! 2^p$ matrix
$$\prod_{k=1}^n \begin{bmatrix}
\scriptsize{\begin{bmatrix} A_{p+2k-2} & \mathbb{O} \\ \mathbb{O} & \mathbb{O} \end{bmatrix}}
& A_{p+2k-1} \end{bmatrix}
\cdot \big(C_{p+2n}\big)^q \cdot
\Big(\prod_{k=1}^n \begin{bmatrix}
\scriptsize{\begin{bmatrix} A_{p+2k-2} & \mathbb{O} \\ \mathbb{O} & \mathbb{O} \end{bmatrix}}
& A_{p+2k-1} \end{bmatrix}\Big)^t,$$
where the $2^{k-1} \times 2^k$ matrix $A_k$ and  the $2^k \times 2^k$ matrix $C_k$ are
defined by 
$$A_k = \begin{bmatrix}
\scriptsize{\begin{bmatrix} A_{k-2} & \mathbb{O} \\ \mathbb{O} & \mathbb{O} \end{bmatrix}}
& A_{k-1} \\ A_{k-1} & \mathbb{O} \end{bmatrix} \mbox{ and }
C_k = \begin{bmatrix}
\scriptsize{\begin{bmatrix} C_{k-2} & \mathbb{O} \\ \mathbb{O} & \mathbb{O} \end{bmatrix}}
& C_{k-1} \\ C_{k-1} & \mathbb{O} \end{bmatrix}$$
with seed matrices
$A_1 \! = \! \begin{bmatrix} 0 & 1 \end{bmatrix}$,
$A_2 \! = \! \begin{bmatrix} 1 & 0 & 0 & 1 \\ 0 & 1 & 0 & 0 \end{bmatrix}$,
$C_0 \! = \! \begin{bmatrix} 1 \end{bmatrix}$ and
$C_1 \! = \! \begin{bmatrix} 0 & 1 \\ 1 & 0 \end{bmatrix}$.
Here, $\begin{bmatrix} 1 & 0 \end{bmatrix}$ is used for the undefined matrix
$\scriptsize{\begin{bmatrix} A_0 & \mathbb{O} \\ \mathbb{O} & \mathbb{O} \end{bmatrix}}$
when $p \! = \! 0$ and $k \! = \! 1$.
\end{theorem}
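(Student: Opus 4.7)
The plan is to apply the state matrix recursion algorithm from \cite{OhV2, OhD1, OhV1}. The algorithm sweeps $AD_{(p,q;n)}$ step by step, and at each cut maintains a state vector whose entries count the partial tilings compatible with a prescribed binary profile at that cut. The binary profile records, for each unit edge of the cut, whether a domino straddles it. Since no domino can protrude outside the region, both the initial and the final state vectors are the standard basis vector $e_1$, corresponding to the empty profile. Hence $\alpha_{(p,q;n)}$ equals the $(1,1)$-entry of the product of all transition matrices accumulated across the sweep.

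I would decompose $AD_{(p,q;n)}$ horizontally into three pieces: a lower staircase of $n$ rows of widths $p+2, p+4, \dots, p+2n$; a central rectangle of $q$ rows of constant width $p+2n$; and an upper staircase that is the vertical reflection of the lower. Let $L_k$ denote the transition matrix advancing the state across the $k$-th lower row, $M$ the matrix across one middle row, and $R_k$ the matrix across the $k$-th upper row. The key identifications are
$$L_k = \Bigl[\;\left[\begin{smallmatrix}A_{p+2k-2}&\mathbb{O}\\\mathbb{O}&\mathbb{O}\end{smallmatrix}\right]\quad A_{p+2k-1}\;\Bigr], \qquad M = C_{p+2n}, \qquad R_k = L_k^t.$$
The last identity is an immediate consequence of the vertical reflection symmetry of the region, and multiplying the three blocks together reproduces the product in Theorem~\ref{thm:main}.

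The identifications of $L_k$ and $M$ are themselves proved by a finer recursion, which scans the new row one cell at a time from left to right starting from the state dictated by the cut below. At each step the local choice is binary: the newly exposed cell either becomes the bottom of a vertical domino projecting into the next row, or it is paired horizontally with its left neighbour (legal only when that neighbour is not already blocked by a descending vertical domino). This dichotomy is exactly the 2-by-2 block pattern of $A_k$: the right column-block implements the vertical option and reduces to $A_{k-1}$ on the remaining cells, while the upper-left block implements the horizontal pairing and recurses further to $A_{k-2}$. The zero blocks rule out illegal configurations, in particular those in which a domino would extend outside the region. An analogous but simpler argument, in which the boundary width does not change, yields the square matrix $C_k$. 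The seed cases $A_1, A_2, C_0, C_1$ and the $p=0$, $k=1$ edge convention are verified by direct inspection of rows of one or two cells.

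The main obstacle I anticipate is reconciling the ``one cell at a time'' internal recursion of $A_k$ with the ``two cells at a time, one per end'' external growth of a staircase layer. The two newly exposed cells of the $k$-th lower row lie at opposite ends of the row, not adjacent to each other, so the recursion has to process them in sequence while ensuring, via the zero-blocks in $L_k$, that no horizontal pairing between a corner cell and a non-existent cell directly below it is counted. Aligning the indexing of the $2^\ell$ state strings with the block decomposition at every level of the induction, and checking that the zero-blocks correctly kill the would-be boundary-violating transitions, is the bookkeeping heart of the proof. Once this is settled, the product collapses to the stated matrix and the $(1,1)$-entry extraction completes the argument.
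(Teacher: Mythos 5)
Your proposal is correct and takes essentially the same approach as the paper: the paper's state matrix recursion algorithm likewise sweeps the region row by row, identifies the per-row transfer matrices with exactly your $L_k$, $C_{p+2n}$ and $L_k^t$ (its Lemmas~\ref{lem:central} and \ref{lem:lower}, where the staircase-end bookkeeping you flag as the main obstacle is resolved by restricting lower/upper rows to $b$-states whose first and last letters are a, shrinking $2^m$ states to $2^{m-2}$), and then extracts the $(1,1)$-entry of the accumulated product (Lemma~\ref{lem:state} and Stage~3). The only cosmetic difference is that the paper encodes your domino/cut profiles through the mosaic-tile formalism with edge labels a and b, and scans each row from the right rather than the left.
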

 
We adjust the main scheme of the state matrix recursion algorithm introduced in~\cite{OhD1}
to solve Theorem~\ref{thm:main} in Sections~\ref{sec:stage1}--\ref{sec:stage3} as three stages.

\section{Stage 1. Conversion to domino mosaics} \label{sec:stage1}

First stage is dedicated to the installation of 
the mosaic system for domino tilings on the expanded Aztec diamond region.
A mosaic system was invented by Lomonaco and Kauffman to give
a precise and workable definition of quantum knots representing
an actual physical quantum system~\cite{LK}.
Later, the author {\em et al\/}. have developed a state matrix argument for knot mosaic enumeration
in a series of papers \cite{HLLO, HO, OHLL, OHLLY}.
This argument has been developed further into the state matrix recursion algorithm
by which we enumerate monomer--dimer coverings on the square lattice~\cite{OhD1}.
We follow the notion and terminology in the paper with some modifications.

In this paper, we consider the four {\em mosaic tiles\/}  $T_1$, $T_2$, $T_3$ and $T_4$ 
illustrated in Figure~\ref{fig:tile}.
Their side edges are labeled with two letters a and b as follows:
letter `a' if it is not touched by a thick arc on the tile, and letter `b' for otherwise.

\begin{figure}[h]
\includegraphics{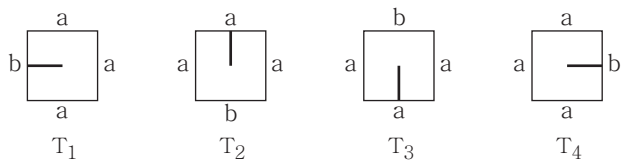}
\caption{Four mosaic tiles labeled with two letters}
\label{fig:tile}
\end{figure}

For non-negative integers $p$, $q$ and $n$,
a {\em $(p,q;n)$-mosaic\/} is an array $M = (M_{ij})$ of those tiles placed on $AD_{(p,q;n)}$,
where $M_{ij}$ denotes the mosaic tile placed at the $i$th column from left to right
and the $j$th row from bottom to top. 
So, it consists of $2n + p$ columns (or $2n + q$ rows) of various length.
We are mainly interested in mosaics whose tiles match each other properly to represent  domino tilings.
For this purpose we consider the following rules. \\

\noindent {\bf Adjacency rule:\/}
Abutting edges of adjacent mosaic tiles in a mosaic are labeled with the same letter.  \\

\noindent {\bf Boundary state requirement:\/}
All boundary edges in a mosaic are labeled with letter a.  \\

As illustrated in Figure~\ref{fig:conversion},
every domino tiling of $AD_{(p,q;n)}$ can be converted into 
a $(p,q;n)$-mosaic which satisfies the two rules.
In this mosaic, $T_1$ and $T_4$ (or, $T_2$ and $T_3$) can be adjoined along the edges
labeled b to produce a dimer.

\begin{figure}[h]
\includegraphics{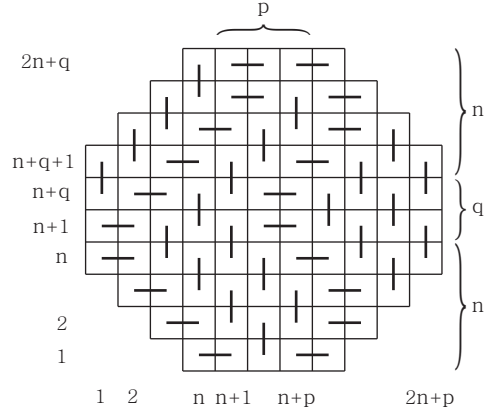}
\caption{Conversion  of the domino tiling of $AD_{(3,2;4)}$ drawn in Figure~\ref{fig:domino} 
to a domino $(3,2;4)$-mosaic}
\label{fig:conversion}
\end{figure}

A mosaic is said to be {\em suitably adjacent\/} if any pair of mosaic tiles
sharing an edge satisfies the adjacency rule.
A suitably adjacent $(p,q;n)$-mosaic is called a {\em domino $(p,q;n)$-mosaic\/}
if it additionally satisfies the boundary state requirement.
The following one-to-one conversion arises naturally. \\

\noindent {\bf One-to-one conversion:\/}
There is a one-to-one correspondence between  
domino tilings of $AD_{(p,q;n)}$ and domino $(p,q;n)$-mosaics.

\section{Stage 2. State matrix recursion formula} \label{sec:stage2}

Now we introduce several types of state matrices for suitably adjacent mosaics.

\subsection{Bar state matrices} 

Consider a suitably adjacent $m \! \times \! 1$-mosaic $M$ for $1 \leq m \leq 2n \! + \! p$,
representing a row of length $m$ in $AD_{(p,q;n)}$, which is call a {\em bar mosaic\/}.
A {\em state\/} is a finite sequence of two letters a and b.
The {\em $l$-state\/} $s_l(M)$ ({\em $r$-state\/} $s_r(M)$) indicates 
the state on the left (right, respectively) boundary edge.
The {\em $b$-state\/} $s_b(M)$ ({\em $t$-state\/} $s_t(M)$) indicates 
the $m$-tuple of states, reading off those on the bottom (top, respectively) 
boundary edges from right to left along the arrows in Figure~\ref{fig:bar}.
The state aa$\cdots$a is called {\em trivial\/}.

\begin{figure}[h]
\includegraphics{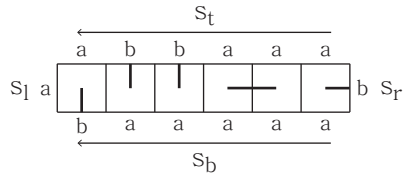}
\caption{A suitably adjacent bar $6 \! \times \! 1$-mosaic with four state indications:
$s_l =$ a, $s_r =$ b, $s_b =$ aaaaab and $s_t =$ aaabba}
\label{fig:bar}
\end{figure}

Length $m$ bar mosaics have possibly $2^m$ kinds of $b$- and $t$-states, called {\em bar states\/}.
We arrange all bar states in the lexicographic order, called the ab{\em -order\/}.
For $1 \leq i \leq 2^m$, let $\epsilon^m_i$ denote the $i$th bar state of length $m$.
 
Given a triple $\langle s_r, s_b, s_t \rangle$ of $r$-, $b$- and $t$-states,
let $\mu_{\langle s_r, s_b, s_t \rangle}$ denote the number of all suitably adjacent bar mosaics $M$ such that 
$s_r(M) = s_r$, $s_b(M) = s_b$, $s_t(M) = s_t$ and $s_l(M)=$ a.
The last triviality condition for $s_l(M)$ is necessary for the left boundary state requirement.

{\em Bar state matrix\/} $X_m$ ($X = A, B$) for suitably adjacent bar $m \! \times \! 1$-mosaics 
is a $2^m \! \times \! 2^m$ matrix $(m_{ij})$ defined by  
$$ m_{ij} = \mu_{\langle \text{x}, \epsilon^m_i, \epsilon^m_j \rangle}$$
where x $\! =\!$ a, b, respectively.
One can observe that information on suitably adjacent bar $m \! \times \! 1$-mosaics is completely encoded 
in two bar state matrices $A_m$ and $B_m$.
Notice that each entry should be either 0 or 1.

\begin{lemma} [Bar state matrix recursion lemma] \label{lem:bar}
Bar state matrices $A_m$ and $B_m$ are obtained by the recurrence relations:
$$A_k = \begin{bmatrix} B_{k-1} & A_{k-1} \\ A_{k-1} & \mathbb{O} \end{bmatrix}
\mbox{ and } B_k = \begin{bmatrix} A_{k-1} & \mathbb{O} \\ \mathbb{O} & \mathbb{O} \end{bmatrix}$$
with seed matrices
$A_1 = \begin{bmatrix} 0 & 1 \\ 1 & 0 \end{bmatrix}$ and
$B_1 = \begin{bmatrix} 1 & 0 \\ 0 & 0 \end{bmatrix}$.
\end{lemma}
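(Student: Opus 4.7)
The plan is to prove both recurrences simultaneously by induction on $k$, decomposing each bar $k \times 1$-mosaic as its leftmost $(k-1)$ tiles followed by a single rightmost tile. For the base case $k = 1$, I would directly enumerate the four tiles $T_1, T_2, T_3, T_4$: each tile is itself a bar $1 \times 1$-mosaic with prescribed left, right, bottom, and top labels, only those with $s_l =$ a contribute to $A_1$ or $B_1$, and a short case check matches the listed seed matrices.

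For the inductive step, fix a suitably adjacent bar $k \times 1$-mosaic $M$ with $s_l(M) =$ a and write $M = M' \, T$, where $M'$ consists of the leftmost $k - 1$ tiles and $T$ is the rightmost tile. Because $s_b$ and $s_t$ are read right to left, concatenation gives $s_b(M) = s_b(T)\,s_b(M')$ and $s_t(M) = s_t(T)\,s_t(M')$, each a single letter contributed by $T$ followed by the $(k-1)$-tuple contributed by $M'$. In the ab-order, splitting length-$k$ states by their leftmost letter partitions the $2^k$ indices into a first block of $2^{k-1}$ states (leftmost letter a) and a second block (leftmost letter b); this induces the $2 \times 2$ block structure of $A_k$ and $B_k$, with the four blocks indexed by the four values of the pair $(s_b(T), s_t(T))$.

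Within each block I would pin down $T$ from the triple $(s_r(T), s_b(T), s_t(T))$ using the fact that each mosaic tile carries exactly one b-labelled edge (it represents a half-domino): this selects at most one of $T_1, \dots, T_4$, and in particular rules out the $(b, b)$-case altogether. Once $T$ is fixed, the adjacency rule forces $s_r(M') = s_l(T)$, so the block count equals the number of suitably adjacent $(k-1)$-mosaics $M'$ with $s_l(M') =$ a, the prescribed $s_r(M')$, and the appropriate lower-dimensional bottom and top states---that is, an entry of $A_{k-1}$, $B_{k-1}$, or the zero matrix. Enumerating the four blocks for $s_r(M) =$ a gives the stated form of $A_k$, and for $s_r(M) =$ b gives $B_k$, in which only the $(a, a)$-block (filled by $T_1$) is nonzero.

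The main place where I expect to trip up is the bookkeeping of the right-to-left reading of $s_b$ and $s_t$ against the left-to-right decomposition $M = M' \, T$, so that the leftmost character of a state in the ab-order really records the rightmost tile's edge rather than the leftmost. Once this convention is locked down, what remains is an elementary eight-case verification.
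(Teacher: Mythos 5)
Your proposal is correct and follows essentially the same route as the paper's proof: induction on $k$, with the $2\times 2$ block structure of $A_k$ and $B_k$ arising from the first letter of the bar states (which, by the right-to-left reading convention, records the rightmost tile), the rightmost tile then being uniquely determined (or impossible) in each of the eight cases, and the adjacency rule forcing the $r$-state of the remaining $(k-1)$-mosaic so that each block is $A_{k-1}$, $B_{k-1}$, or $\mathbb{O}$. Your explicit flagging of the right-to-left versus left-to-right bookkeeping is exactly the convention the paper relies on, and your ``exactly one b-labelled edge per tile'' observation is the same fact the paper invokes implicitly via its Table of eight cases.
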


Note that we may start with matrices 
$A_0 = \begin{bmatrix} 1 \end{bmatrix}$ and $B_0 = \begin{bmatrix} 0 \end{bmatrix}$
instead of $A_1$ and $B_1$.
This lemma is a simple version of the bar state matrix recursion lemma \cite[Lemma 7]{OhD1},
applying $z=0$.
Here we restate the proof for the completeness of the paper.

\begin{proof}
We use induction on $k$.
A straightforward observation on the three mosaic tiles $T_2$, $T_3$ and $T_4$
establishes the lemma for $k=1$.
For example, $(2,1)$-entry of $A_1$ is 
$$ \mu_{\langle \text{a}, \epsilon^1_2, \epsilon^1_1 \rangle} = 
\mu_{\langle \text{a}, \text{b}, \text{a} \rangle} = 1 $$ 
since only $T_3$ satisfies the requirement.

Assume that bar state matrices $A_{k-1}$ and $B_{k-1}$ satisfy the statement.
Now we consider $A_k$ for one case.
Partition this matrix of size $2^k \! \times \! 2^k$
into four block submatrices of size $2^{(k-1)} \! \times \! 2^{(k-1)}$, 
and consider the 12-submatrix of $A_k$ 
i.e., the $(1,2)$-component in the $2 \! \times \! 2$ array of four blocks.

The entries of the 12-submatrix have the numbers $\mu_{\langle \text{a}, s_b, s_t \rangle}$ 
where $s_b$ and $s_t$ are bar states of length $k$, starting with letters a and b, respectively because of the ab-order.
A suitably adjacent bar $1 \! \times \! k$-mosaic corresponding to these triples 
$\langle \text{a}, s_b, s_t \rangle$ must have unique tile $T_2$ at the rightmost,
and so its second rightmost tile must have $r$-state a by the adjacency rule.
Thus the 12-submatrix of $A_{k}$ is $A_{k-1}$.
Using the same argument, we derive Table~\ref{tab:barset} presenting all possible eight cases as we desired.
This completes the proof.
\end{proof}

\begin{table}[h]
\def\arraystretch{1.2} 
\begin{tabular}{clll}      \hline \hline
 & Submatrix for $\langle s_r, s_b, s_t \rangle$ \ \ & Rightmost tile \ & Submatrix \\    \hline
\multirow{3}{5mm}{$A_k$}
 & 11-submatrix $\langle \text{a}, \text{a} \! \cdot \! \cdot,\text{a} \! \cdot \! \cdot \rangle$ 
 & $T_1$ &  $B_{k-1}$ \\
 & 12-submatrix $\langle \text{a}, \text{a} \! \cdot \! \cdot,\text{b} \! \cdot \! \cdot \rangle$
 & $T_2$ & $A_{k-1}$ \\
 & 21-submatrix $\langle \text{a}, \text{b} \! \cdot \! \cdot,\text{a} \! \cdot \! \cdot \rangle$
 & $T_3$ & $A_{k-1}$ \\    \hline
\multirow{1}{3mm}{$B_k$}
 & 11-submatrix $\langle \text{b}, \text{a} \! \cdot \! \cdot,\text{a} \! \cdot \! \cdot \rangle$ 
 & $T_4$ & $A_{k-1}$ \\    \hline
 & The other four cases & None & $\mathbb{O}$ \\  \hline \hline
\end{tabular}
\vspace{4mm}
\caption{Eight submatrices of $A_k$ and $B_k$}
\label{tab:barset}
\end{table}

\subsection{Three types of bar state matrices} 
Now we categorize each $j$th row of $AD_{(p,q;n)}$ into three types:
lower bar mosaics for $j \! = \! 1, \dots, n$,
central bar mosaics for $j \! = \! n \! + \! 1, \dots, n \! + q$,
and upper bar mosaics for $j \! = \! n \! + \! q \! + \! 1, \dots, 2n \! + q$
as in Figure~\ref{fig:threebars}.
To be a row of a domino $(p,q;n)$-mosaic, each bar mosaic has trivial $l$- and $r$-states a,
and furthermore each lower (or upper) bar mosaic has $b$-state (or $t$-state, respectively)
of the form whose the first and the last letters associated to the rightmost and the leftmost tiles, respectively, are a,
while each central bar mosaic has no further condition.

\begin{figure}[h]
\includegraphics{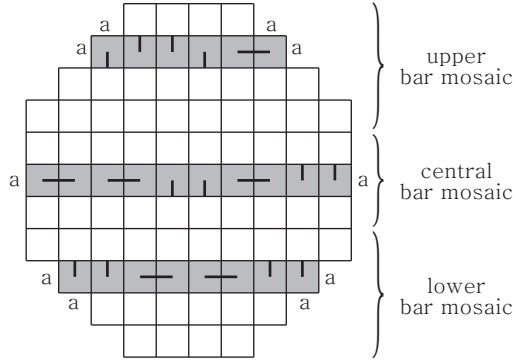}
\caption{Three types of suitably adjacent bar mosaics.}
\label{fig:threebars}
\end{figure}

We refine the definition of a bar state matrix according to this category.
{\em Central bar state matrix\/} $C_m$  for suitably adjacent central bar mosaics of length $m$
is a $2^m \! \times \! 2^m$ matrix $(x_{ij})$ 
where $x_{ij} = \mu_{\langle \text{a}, \epsilon^m_i, \epsilon^m_j \rangle}$.
Obviously $C_m$ is just bar state matrix $A_m$.
{\em Lower bar state matrix\/} $L_m$ for suitably adjacent lower bar mosaics of length $m$
is a $2^{m-2} \! \times \! 2^m$ matrix $(x_{ij})$ 
where $x_{ij} = \mu_{\langle \text{a}, \text{a} \epsilon^{m-2}_i \text{a}, \epsilon^m_j \rangle}$.
Here $\epsilon^{m-2}_i$ in $s_b$ indicates the $i$th $b$-state among $2^{m-2}$ states 
after ignoring the leftmost and the rightmost tiles whose two bottom edges have fixed state a.
Similarly {\em Upper bar state matrix\/} $U_m$ for suitably adjacent upper bar mosaics of length $m$ 
is a $2^m \! \times \! 2^{m-2}$ matrix $(x_{ij})$ 
where $x_{ij} = \mu_{\langle \text{a}, \epsilon^m_i, \text{a} \epsilon^{m-2}_j \text{a} \rangle}$.

\begin{lemma} \label{lem:central}
Central bar state matrix $C_m$ is obtained from the recurrence relation,
for $k = 2, \dots, m$,
$$C_k = \begin{bmatrix}
\scriptsize{\begin{bmatrix} C_{k-2} & \mathbb{O} \\ \mathbb{O} & \mathbb{O} \end{bmatrix}}
& C_{k-1} \\ C_{k-1} & \mathbb{O} \end{bmatrix} $$
starting with
$C_0 = \begin{bmatrix} 1 \end{bmatrix}$ and
$C_1 = \begin{bmatrix} 0 & 1 \\ 1 & 0 \end{bmatrix}$.
\end{lemma}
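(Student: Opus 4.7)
The plan is to deduce the recurrence directly from Lemma~\ref{lem:bar}, exploiting the simple observation (already noted in the definition just above the statement) that the central bar state matrix $C_m$ coincides with the bar state matrix $A_m$. Indeed, a central bar mosaic of length $m$ is exactly a suitably adjacent bar $m \! \times \! 1$-mosaic with $l$-state a and $r$-state a, and no restriction on the $b$- or $t$-states; this is precisely the content encoded in $A_m$ (whose subscript refers to the requirement $s_r = \mathrm{a}$). So it suffices to rewrite the recursion from Lemma~\ref{lem:bar} purely in terms of the $A$-matrices.

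First I would invoke Lemma~\ref{lem:bar} to write
$$A_k = \begin{bmatrix} B_{k-1} & A_{k-1} \\ A_{k-1} & \mathbb{O} \end{bmatrix}, \qquad B_{k-1} = \begin{bmatrix} A_{k-2} & \mathbb{O} \\ \mathbb{O} & \mathbb{O} \end{bmatrix},$$
the second identity being the $B$-recursion evaluated at index $k-1$. Substituting the block form of $B_{k-1}$ into the block form of $A_k$ eliminates the auxiliary matrix $B$ altogether and produces
$$A_k = \begin{bmatrix} \scriptsize{\begin{bmatrix} A_{k-2} & \mathbb{O} \\ \mathbb{O} & \mathbb{O} \end{bmatrix}} & A_{k-1} \\ A_{k-1} & \mathbb{O} \end{bmatrix}.$$
Translating through the identity $C_m = A_m$ yields precisely the claimed recurrence for $C_k$.

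Finally I would verify the seeds. The base case $C_1 = A_1 = \begin{bmatrix} 0 & 1 \\ 1 & 0 \end{bmatrix}$ is the matrix stated in Lemma~\ref{lem:bar}. For $C_0$, a central bar of length $0$ is the empty mosaic, contributing a single trivial configuration, so $C_0 = [1] = A_0$, consistent with the remark following Lemma~\ref{lem:bar}. As a sanity check one can compute $C_2$ from the new recurrence and confirm it agrees with $A_2$ computed from the $(A,B)$-recurrence. There is essentially no obstacle here: the content of the lemma is a repackaging of Lemma~\ref{lem:bar} that collapses the two-matrix recursion $(A_k, B_k)$ into a self-contained recursion for $A_k$ alone by unfolding one level of the $B$-recursion. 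The only mildly delicate point is keeping the block-partition indexing consistent with the ab-order so that the substitution $B_{k-1} \mapsto \begin{bmatrix} A_{k-2} & \mathbb{O} \\ \mathbb{O} & \mathbb{O} \end{bmatrix}$ lands in the correct $(1,1)$-corner of the upper-left block of $A_k$, which is immediate from the ordering conventions.
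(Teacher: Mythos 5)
Your proof is correct and follows essentially the same route as the paper: the paper's (very terse) proof likewise notes that $C_m$ is just $A_m$ and that the two recurrences of Lemma~\ref{lem:bar} ``easily merge into one'' with the new seeds $C_0$ and $C_1$. Your write-up simply makes explicit the substitution of $B_{k-1}=\begin{bmatrix} A_{k-2} & \mathbb{O} \\ \mathbb{O} & \mathbb{O} \end{bmatrix}$ into the block form of $A_k$, together with the seed check $C_0=A_0=[1]$, which is exactly the intended argument.
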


\begin{proof}
$C_m$ is just bar state matrix $A_m$ in Lemma~\ref{lem:bar}.
Note that the two recurrence relations in the lemma easily merge into one
with new seed matrices $C_0$ and $C_1$.
\end{proof}

\begin{lemma}\label{lem:lower}
The lower bar state matrix is
$$L_m = \begin{bmatrix}
\scriptsize{\begin{bmatrix} A_{m-2} & \mathbb{O} \\ \mathbb{O} & \mathbb{O} \end{bmatrix}}
& A_{m-1} \end{bmatrix}$$
where $A_{m-1}$ and $A_{m-2}$ are obtained from the recurrence relation,
for $k = 3, \dots, m \! - \! 1$,
$$A_k = \begin{bmatrix}
\scriptsize{\begin{bmatrix} A_{k-2} & \mathbb{O} \\ \mathbb{O} & \mathbb{O} \end{bmatrix}}
& A_{k-1} \\ A_{k-1} & \mathbb{O} \end{bmatrix} $$
starting with
$A_1 = \begin{bmatrix} 0 & 1 \end{bmatrix}$ and
$A_2 = \begin{bmatrix} 1 & 0 & 0 & 1 \\ 0 & 1 & 0 & 0 \end{bmatrix}$.
Here, $\begin{bmatrix} 1 & 0 \end{bmatrix}$ is used for the undefined matrix
$\begin{bmatrix} A_0 & 0 \\ 0 & 0 \end{bmatrix}$
when $m=2$.

Furthermore the upper bar state matrix is the transpose of the lower bar state matrix, i.e.,
$$U_m = (L_m)^t.$$
\end{lemma}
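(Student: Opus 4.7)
The plan is to adapt the rightmost-tile argument of Lemma~\ref{lem:bar} to the constrained class of lower bar mosaics. First I would give the matrix $A_k$ appearing in the statement a mosaic meaning: its $(i,j)$-entry equals $\mu_{\langle \text{a},\epsilon^{k-1}_i\text{a},\epsilon^k_j\rangle}$, so $A_k$ enumerates suitably adjacent bar $k$-mosaics with trivial $l$- and $r$-states whose $b$-state has leftmost letter a. Alongside $A_k$ I would introduce an auxiliary $2^{k-1}\times 2^k$ matrix $B_k$ defined by $B_k(i,j) = \mu_{\langle \text{b},\epsilon^{k-1}_i\text{a},\epsilon^k_j\rangle}$, the analogue with $r$-state b.

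To establish the block formula $L_m = [\,B_{m-1}\mid A_{m-1}\,]$, I would peel off the rightmost tile of a lower bar $m$-mosaic. Because $s_b = \text{a}\epsilon^{m-2}_i\text{a}$ starts with a and the $r$-state is a, the rightmost tile has bottom and right edges both labeled a, forcing it to be either $T_2$ or $T_4$. Splitting the columns of $L_m$ according to the top edge of the rightmost tile, the left half (first letter of $s_t$ is a) corresponds to $T_4$, whose left edge is b, and the right half (first letter of $s_t$ is b) corresponds to $T_2$, whose left edge is a. In either case the remaining bar $(m-1)$-mosaic has $s_b = \epsilon^{m-2}_i\text{a}$ and $l$-state a, while its $r$-state is b or a respectively, so the two halves of $L_m$ coincide with $B_{m-1}$ and $A_{m-1}$.

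Next I would derive the recursion by the same tile-peeling logic applied to $A_k$ and $B_k$. For $B_k$ the rightmost tile must have $r$-state b, so it is uniquely $T_1$; since $T_1$ has top, bottom and left edges all a, the first letters of $s_b$ and $s_t$ are forced to be a and the remaining bar $(k-1)$-mosaic is counted by $A_{k-1}$, whence $B_k$ has $A_{k-1}$ in its top-left $2^{k-2}\times 2^{k-1}$ block and zeros elsewhere, i.e.\ $B_k = \bigl[\begin{smallmatrix}A_{k-1}&\mathbb{O}\\ \mathbb{O}&\mathbb{O}\end{smallmatrix}\bigr]$. For $A_k$, the rightmost tile lies in $\{T_2,T_3,T_4\}$, and a case split on the first letters of $s_b$ (two row blocks) and of $s_t$ (two column blocks) reproduces the pattern of Table~\ref{tab:barset} with blocks $B_{k-1}$, $A_{k-1}$, $A_{k-1}$, $\mathbb{O}$; substituting the formula for $B_{k-1}$ collapses this into the single recursion stated. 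The seeds $A_1=[0\;1]$, $A_2$, and the convention $B_1=[1\;0]$ that is needed when $p=0$ and $k=1$ all follow from direct inspection of the four mosaic tiles.

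Finally, $U_m = (L_m)^t$ follows from the vertical reflection symmetry of the mosaic system: reflection through the horizontal midline swaps $T_2$ with $T_3$, fixes $T_1$ and $T_4$, preserves the adjacency rule, and interchanges the $b$- and $t$-states while leaving the $l$- and $r$-states unchanged; this bijection between upper and lower bar mosaics transposes the counting matrix. The main subtlety I anticipate is aligning the ab-ordering of states with the top/bottom and left/right partitions of the recursion, in particular ensuring that the constraint ``leftmost letter of $s_b$ equals a'' is compatible with the recursive row splitting at each step.
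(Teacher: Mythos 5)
Your proposal is correct and takes essentially the same route as the paper: you realize $L_m$ by restricting the bar state matrices of Lemma~\ref{lem:bar} so that the $b$-state letter at the leftmost tile is a (encoded in the modified seeds $A_1=\begin{bmatrix}0&1\end{bmatrix}$, $B_1=\begin{bmatrix}1&0\end{bmatrix}$), impose the rightmost-tile condition by keeping only the top block row $L_m=\begin{bmatrix}B_{m-1}&A_{m-1}\end{bmatrix}$, collapse the two recursions into one by substituting the expression for $B_{k-1}$, and obtain $U_m=(L_m)^t$ from the reflection symmetry exchanging $b$- and $t$-states. The only flaw is notational: relative to Table~\ref{tab:barset} you have swapped the names $T_1$ and $T_4$ (in the paper $T_1$ is the tile whose left edge is labeled b and $T_4$ the one whose right edge is labeled b), but since you use the tiles' edge labels consistently throughout, this does not affect the validity of the argument.
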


\begin{proof}
Lower bar state matrix $L_m$ is slightly differ from the central bar state matrix
so as the first and the last letters of each $m$-tuple $b$-state must be deleted
because of the ignorance of the rightmost and the leftmost tiles, respectively.
This means that we only have to select the rows of $A_m$ in Lemma~\ref{lem:bar}
representing $b$-states whose first and last letters are a.
Therefore we instead use $A_1 = \begin{bmatrix} 0 & 1 \end{bmatrix}$ and
$B_1 = \begin{bmatrix} 1 & 0 \end{bmatrix}$
(according to the ignorance of the last letter associated to the leftmost tile), and
$L_m = \begin{bmatrix} B_{m-1} & A_{m-1} \end{bmatrix}$
(according to the ignorance of the first letter associated to the rightmost tile) only when $k=1,m$.
Note that, when $m=2$,
we have to use $\begin{bmatrix} 1 & 0 \end{bmatrix}$ instead of $\begin{bmatrix} A_0 & 0 \\ 0 & 0 \end{bmatrix}$
since $L_2 = \begin{bmatrix} B_1 & A_1 \end{bmatrix}$.

Upper bar state matrix $U_m$ is just the transpose of $L_m$
because of the symmetricity between a lower bar mosaic and an upper bar mosaic of the same length.
This is just exchanging $b$-states and $t$-states.
\end{proof}

\subsection{State matrices}

{\em State matrix\/} $N_m$ for suitably adjacent mosaics consisting of the $m$ consecutive rows on $AD_{(p,q;n)}$ from the bottom
is a $2^p \! \times \! 2^l$ matrix $(n_{ij})$
where $l$ is the length of the topmost $m$th row (or, if it is one of the upper bar mosaics, $l = m \! + \! 2$)
and $n_{ij}$ is the number of such suitably adjacent mosaics
whose the bottommost bar mosaic has $s_b = \text{a} \epsilon^p_i \text{a}$,
the topmost bar mosaic has $s_t = \epsilon^m_j$
(if it is one of the upper bar mosaics, $s_t = \text{a} \epsilon^m_j \text{a}$),
and all the other boundary edges have state a as the bottom one in Figure~\ref{fig:split}.

\begin{lemma}\label{lem:state}
$$N_{2n+q} = \prod_{k=1}^n L_{p+2k} \cdot \big(C_{p+2n}\big)^q \cdot
\prod_{k=1}^n U_{p+2n+2-2k}$$
\end{lemma}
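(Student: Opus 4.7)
The plan is to prove the formula by induction on the number $m$ of consecutive bottom rows, showing that $N_m$ equals the product of the first $m$ matrices in the ordered list
\[
L_{p+2},\ L_{p+4},\ \ldots,\ L_{p+2n},\ \underbrace{C_{p+2n},\ \ldots,\ C_{p+2n}}_{q\text{ copies}},\ U_{p+2n},\ U_{p+2n-2},\ \ldots,\ U_{p+2}.
\]
The base case $m=1$ is immediate: the bottom row is the first lower bar of length $p+2$, and all of its bottom edges lie on $\partial AD_{(p,q;n)}$, so the boundary state requirement forces its $b$-state into the form $\text{a}\,\epsilon^p\,\text{a}$ while its $t$-state is unrestricted. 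This is exactly the definition of $L_{p+2}$.

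For the inductive step, I would establish that appending row $m+1$ on top of the existing stack amounts to right-multiplying $N_m$ by the appropriate bar state matrix. The mechanism is that the $t$-state of row $m$ must match the $b$-state of row $m+1$ along the horizontal edges they share, by the adjacency rule, while any overhanging edge (belonging to only one of the two rows) lies on $\partial AD_{(p,q;n)}$ and hence carries the letter $\text{a}$ by the boundary state requirement. I would split into three regimes according to whether row $m+1$ is wider than, equal to, or narrower than row $m$ by $2$. A wider row $m+1$ is a new lower bar whose $b$-state has the form $\text{a}\,\sigma\,\text{a}$ with $\sigma$ equal to the $t$-state of row $m$, so one right-multiplies by $L_{p+2(m+1)}$, whose row index is precisely this middle $\sigma$. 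An equal-length row $m+1$ lies in or on the boundary of the central block (that is, lower-to-central, central-to-central, or central-to-upper), where the full $b$- and $t$-states coincide and the right-multiplier is $C_{p+2n}$ or $U_{p+2n}$ as appropriate. A narrower row $m+1$ is a further upper bar, in which case the $t$-state of row $m$ already has the form $\text{a}\,\sigma\,\text{a}$ and $\sigma$ equals the full $b$-state of row $m+1$, so the right-multiplier is $U_\ell$ with $\ell$ the length of row $m+1$.

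In every case, the identity $(N_m\cdot X)_{ij}=\sum_\epsilon (N_m)_{i,\epsilon}\,X_{\epsilon,j}$ has the correct combinatorial meaning: it counts pairs of a valid suitably adjacent mosaic on the bottom $m$ rows with some intermediate top state $\epsilon$, together with a valid row $m+1$ whose bottom state matches $\epsilon$. This is exactly the count defining $N_{m+1}$, and iterating through all $2n+q$ rows yields the claimed product. The main obstacle is not any deep argument but a careful bookkeeping task: the column dimension of $N_m$ and the interpretation of its column index depend on which of the three row types is topmost, and one must verify in each of the three transition regimes that this convention aligns with the row dimension and row-index convention of the next bar state matrix $L$, $C$, or $U$. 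Once these alignments are checked, the induction runs without further complication.
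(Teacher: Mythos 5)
Your proposal is correct and follows essentially the same route as the paper: induction on the number $m$ of bottom rows, tearing off (or appending) the topmost row, matching the $t$-state of the stack to the $b$-state of the new row via the adjacency rule, and interpreting the matrix product entrywise as the count of compatible pairs. The only difference is presentational—you spell out the three width-transition regimes explicitly, while the paper treats them uniformly by letting $l$ denote the length of the abutting edges—so no substantive gap exists.
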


\begin{proof}
It is enough showing that for $m=1, \dots, 2n \! + \! q$,
$N_m$ is the multiplication of the related $m$ lower, central or upper bar state matrices 
associated to each row.

Use induction on $m$.
Obviously  $N_1 = L_{p+2}$.
Assume that $N_m$ satisfies the statement.
Consider a suitably adjacent mosaic consisting of the $m \! + \! 1$ rows from the bottom.
Split it into a suitably adjacent mosaic consisting of $m$ bar mosaics and a suitably adjacent bar mosaic by tearing off the topmost row.
According to the adjacency rule,
the $t$-state of the lower mosaic and the $b$-state of the topmost bar mosaic
on the abutting horizontal edges must coincide as shown in Figure~\ref{fig:split}.
Let $N_m = (n_{ij})$, $N_{m+1} = (n_{ij}')$ and
the bar state matrix for the topmost bar mosaic be $(a_{ij})$.

\begin{figure}[h]
\includegraphics{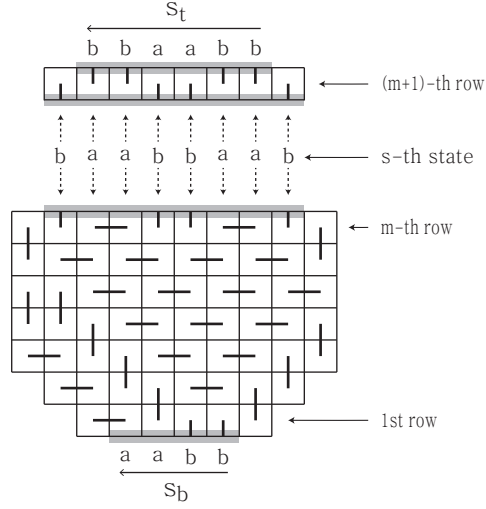}
\caption{Attacing bar mosaics}
\label{fig:split}
\end{figure}

Let $l$ be the length of the abutting horizontal edges, two shaded parts in middle of the picture.
Given an $s$th state among $2^l$ states,
$n_{is}  \cdot  a_{sj}$ indicates the number of suitably adjacent mosaics consisting of $m \! + \! 1$ bar mosaics
where $s_b = \text{a} \epsilon^p_i \text{a}$,
$s_t = \epsilon^m_j$ (or, if it is one of the upper bar mosaics, $s_t = \text{a} \epsilon^m_j \text{a}$),
$m$th bar mosaic has $s_t = \epsilon^m_s$
(or, if it is one of the upper bar mosaics, $s_t = \text{a} \epsilon^m_s \text{a}$),
and all the other boundary edges have state a.
Since all $2^l$ kinds of states arise as states of these $l$ abutting horizontal edges,
we get
$$ n_{ij}' = \sum^{2^l}_{s=1} n_{is}  \cdot a_{sj},$$
so
$N_{m+1}$ is the multiplication of the $m \! + \! 1$ bar state matrices
related to the $m \! + \! 1$ consecutive rows.
\end{proof}

\section{Stage 3. State matrix analyzing} \label{sec:stage3}

\begin{proof}[Proof of Theorem~\ref{thm:main}.]
Each $(1,1)$-entry of $N_{2n+q}$ is the number of suitably adjacent mosaics on $AD_{(p,q;n)}$
such that the bottommost and the topmost bar mosaics have trivial $b$- and $t$-states, respectively,
and all the other boundary edges have state a.
The point is that this satisfies the boundary state requirement to represent domino $(p,q;n)$-mosaics.
Thus we get the equality
$$\alpha_{(p,q;n)} = \mbox{(1,1)-entry of } N_{2n+q}.$$
This combined with Lemmas~\ref{lem:central}--\ref{lem:state}
completes the proof.
\end{proof}

\end{document}